\title{Sharply $2$-transitive groups}
\author{Katrin Tent and Martin Ziegler}
\date{24.8.2014}
\newtheorem{theorem}{Theorem}[section]
\newtheorem{lemma}[theorem]{Lemma}
\newtheorem{corollary}[theorem]{Corollary}
\newtheorem{definition}[theorem]{Definition}
\newcommand{\nc}{\newcommand}
\nc{\inv}{^{-1}}
\begin{document}
\maketitle
\begin{abstract}
  We give an explicit construction of sharply
  $2$-transitive groups with fixed point free involutions and
  without nontrivial abelian normal subgroup.
\end{abstract}

\section{Introduction}

The finite sharply $2$-transitive groups were classified by Zassenhaus
\cite{Z} in the 1930's. They were shown to always contain a regular
abelian normal subgroup. It remained an open question whether the same
holds for infinite sharply $2$-transitive groups. The first examples
of sharply $2$-transitive groups without abelian normal subgroup were
recently constructed in \cite{RST}. In these examples involutions have
no fixed points. We here give an alternative approach to such a
construction by using partially defined group actions.

\section{The construction}\label{S:terminology}

\begin{theorem}\label{t:main}
  Let $G_0$ be a group containing an involution $t$. Suppose that
  $G_0$ acts on a set $X$ and satisfies the following:
  \begin{enumerate}
  \item no nontrivial element of $G_0$ fixes more than one element
    of $X$ (we say that $G_0$ is $2$-sharp);
  \item all involutions are conjugate to $t$;
  \item $t$ does not fix any element of $X$.
  \end{enumerate}
  Then we can extend $G_0$ to a sharply $2$-transitive action of
  \[G=\bigl(G_0\ast_{\langle t\rangle}
  (\langle t\rangle\times F(S))\bigr)\ast F(R)\]
  on a suitable set $Y\supset X$, where $F(R), F(S)$ are free groups
  on disjoint sets $R, S$ with $|R|,|S|= \max{|G_0|,\aleph_0}$.
\end{theorem}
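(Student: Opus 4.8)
The plan is first to replace sharp $2$-transitivity by two conditions tailored to the two kinds of generators in $G$, and then to realize those conditions by a direct limit of $2$-sharp actions. Concretely, I would prove the following reduction: if $H$ acts $2$-sharply on a set $Z$ and $t\in H$ is an involution with no fixed point, then the action is sharply $2$-transitive as soon as
\begin{enumerate}
\item[(a)] $C_H(t)$ acts transitively on $Z$, and
\item[(b)] every ordered pair $(a,b)$ with $a\neq b$ lies in the $H$-orbit of a pair $(p,tp)$.
\end{enumerate}
Since $2$-sharpness is exactly the uniqueness clause of sharp $2$-transitivity, only transitivity on ordered pairs has to be checked. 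By (b) every ordered pair is $H$-equivalent to a ``$t$-pair'' $(p,tp)$, and by (a), given $p,p'$ there is $h\in C_H(t)$ with $hp=p'$, so that $h(p,tp)=(p',tp')$; hence all $t$-pairs form a single orbit, all ordered pairs form a single orbit, and the action is $2$-transitive. This is exactly why $G$ has two kinds of generators: the free generators forced to commute with $t$ (the copy of $F(S)$ amalgamated over $\langle t\rangle$) serve to enlarge $C(t)$ until it is transitive, while the genuinely free generators ($F(R)$) serve to move an arbitrary pair onto a $t$-pair.

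\textbf{The construction.} Starting from $(G_0,X)$ I would run a recursion of length $\kappa=\max(|G_0|,\aleph_0)$, maintaining a $2$-sharp action of a group $H_\alpha$ on a set $Z_\alpha\supseteq X$ in which $t$ is fixed-point-free, together with the bookkeeping of all instances of (a) and (b) still to be met (new instances appear as $Z_\alpha$ grows, so a standard dovetailing is needed). An instance of (a), ``connect $p$ and $q$ inside one $C(t)$-orbit'', is handled by amalgamating a new free generator $s$ with $st=ts$ and $sp=q$ (so automatically $s(tp)=tq$); this is one generator of $F(S)$ and the amalgamation over $\langle t\rangle$. An instance of (b) for a pair $(a,b)$ is handled by adjoining one fresh $t$-pair $\{x,tx\}$ (extending $t$ by the transposition $(x\ tx)$, which keeps it fixed-point-free) together with a new free generator $r\in R$ with $ra=x$ and $rb=tx$; then $(a,b)=(r^{-1}x,\,r^{-1}tx)$ is a $t$-pair up to $H$. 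Each step must extend the action to the newly created points.

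\textbf{The main obstacle.} The crux is the extension lemma: given a $2$-sharp action and a prescribed partial bijection (the constraints $sp=q$, resp.\ $ra=x$, $rb=tx$) I must extend to an action of the enlarged group on an enlarged set that is still $2$-sharp with $t$ fixed-point-free. I would define the new set as the freest completion of the partial action consistent with the constraints and with the normal form of the free product (resp.\ amalgamated product), adjoining the orbit of the new generator over fresh points. The point to verify is $2$-sharpness: I would show that in this completion every point stabilizer is either trivial (at the fresh points) or an $H_\alpha$-conjugate of a stabilizer of a point of $Z_\alpha$ (at the old points). Via the normal form --- equivalently, the action on the associated Bass--Serre tree, where an element fixing two points is elliptic and conjugate into a vertex group --- an element fixing two distinct points of $Z_{\alpha+1}$ is conjugate into $H_\alpha$, where it fixes two points and is therefore trivial by the inductive $2$-sharpness. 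This normal-form/ping-pong analysis is where essentially all the work lies; it is also where one checks that the added generators really are free and $t$-central, so that the limit group has the stated presentation.

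\textbf{Conclusion.} Taking $Y=\bigcup_\alpha Z_\alpha$ and $G=\bigcup_\alpha H_\alpha$, the dovetailing ensures (a) and (b) in the limit, so by the reduction $G$ acts sharply $2$-transitively on $Y$; the action is faithful because it is $2$-sharp, and $t$ remains fixed-point-free, hence so are all its conjugates. Because every step added either a $t$-central free generator (amalgamated over $\langle t\rangle$) or an absolutely free generator, the limit group is exactly $(G_0\ast_{\langle t\rangle}(\langle t\rangle\times F(S)))\ast F(R)$ with $|R|=|S|=\kappa$ (assuming, as the cardinalities force, $|X|\le\kappa$). Finally, all involutions are conjugate to $t$: by the standard torsion theorem for amalgamated free products every finite-order element of $G_0\ast_{\langle t\rangle}(\langle t\rangle\times F(S))$ is conjugate into $G_0$ or into $\langle t\rangle\times F(S)$ (whose only involution is $t$), and since $F(R)$ is torsion-free the same theorem pushes every involution of $G$ into the first factor; with hypothesis (2) this makes every involution conjugate to $t$, hence fixed-point-free.
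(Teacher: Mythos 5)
Your overall architecture is sound and in fact parallels the paper's construction closely (the $S$-generators, central over $t$, serve to join $t$-pairs; the $R$-generators serve to make arbitrary pairs equivalent to $t$-pairs; everything is iterated transfinitely with dovetailing), the main cosmetic difference being that the paper keeps the group $G$ fixed and works with \emph{partial} actions verified by word combinatorics, while you grow a chain of groups with total actions and appeal to normal forms. The genuine gap is in your key extension lemma, which, as you state it, is false: one cannot extend ``a $2$-sharp action with prescribed constraints'' unconditionally. Concretely, in your step (b), suppose the pair $(a,b)$ is flipped by some $f\in H_\alpha$, i.e.\ $fa=b$ and $fb=a$; this is perfectly compatible with your inductive invariant ($2$-sharpness plus $t$ fixed-point-free), since nothing in that invariant forces a flippable pair to lie in the orbit of a $t$-pair. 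After you impose $ra=x$, $rb=tx$, the element $w=f^{-1}r^{-1}tr$ of $H_\alpha\ast\langle r\rangle$ satisfies $wa=a$ and $wb=b$, yet $w\neq 1$ by the normal form for free products (its syllables $f^{-1},r^{-1},t,r$ alternate between the factors). So \emph{no} completion of this constrained partial action can be $2$-sharp, and your claimed stabilizer analysis (``an element fixing two distinct points of $Z_{\alpha+1}$ is conjugate into $H_\alpha$'') is exactly what breaks. The same failure occurs in step (a) if the $t$-pairs $(p,tp)$ and $(q,tq)$ are already connected by some element of $H_\alpha$: that element lies in a coset of the new $s$, producing a nontrivial element of the amalgam fixing both $p$ and $tp$.

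What is missing is the bookkeeping that rules these configurations out, and this is precisely why the paper works with ``good'' actions (whose clause (2) demands that every flip of a pair be by a conjugate of $t$) and why its two joining lemmas carry the hypotheses ``there is no $g$ with $ag=b$'' and ``no $g$ flips $b$''. To repair your proof you must (i) add such hypotheses to your extension lemma, and (ii) prove that every instance your recursion actually processes satisfies them. Both facts are true but require arguments you never make: for (b), show that a pair flippable in $H_\alpha$ already lies in the orbit of a $t$-pair --- flips are unique under $2$-sharpness (two flips of the same pair differ by an element fixing two points), hence every flip is an involution, hence conjugate to $t$ by the torsion theorem for amalgams and free products together with hypothesis (2) of the theorem, and a pair flipped by $hth^{-1}$ equals $h$ applied to a $t$-pair; for (a), show that any $g$ with $g(p,tp)=(q,tq)$ must centralize $t$, because $gtg^{-1}$ and $t$ both flip $(q,tq)$ and flips are unique, so ``not in one $C(t)$-orbit'' really does imply ``not connected at all''. (You do invoke the torsion theorem, but only at the very end and for a different purpose, the fixed-point-freeness of involutions.) Only with these two lemmas in place do your instances meet the hypotheses, and only then can the deferred ping-pong/normal-form verification --- which also needs more care than ``each point stabilizer is a conjugate of an old one'', since an element fixing two points lies in an intersection of two such conjugates --- go through.
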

Note that $G$ does not contain any nontrivial abelian normal subgroup.
Hence we obtain:

\begin{corollary}\label{c:main}
Any group can be extended to a group acting sharply $2$-transitively
on some appropriate set without nontrivial abelian normal subgroup.
\end{corollary}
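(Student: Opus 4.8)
The plan is to derive the corollary from Theorem~\ref{t:main}, so that the entire task reduces to the following: given an arbitrary group $\Gamma$, embed it into a group $G_0$ that acts on some set $X$ satisfying hypotheses (1)--(3). Once this is achieved, Theorem~\ref{t:main} produces $G \supseteq G_0 \supseteq \Gamma$ acting sharply $2$-transitively, and the remark preceding the corollary guarantees that $G$ has no nontrivial abelian normal subgroup; this is exactly the assertion. (If a justification of that remark is wanted I would add that $G$ is a nontrivial free product $(\cdots)\ast F(R)$ with $F(R)$ a nonabelian free group, so standard Bass--Serre theory shows it has no nontrivial abelian normal subgroup.)

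To manufacture $G_0$ I would first secure an involution and then fuse all involutions into a single conjugacy class. Put $\Gamma' = \Gamma \ast \langle t\rangle$ for a new involution $t$, so that $\Gamma \le \Gamma'$ and $\Gamma'$ contains an involution, and then form the multiple HNN extension
\[
G_0 = \bigl\langle\, \Gamma',\ (p_u)_u \ \bigm|\ p_u\inv u\,p_u = t \ \text{for every involution $u$ of $\Gamma'$}\,\bigr\rangle ,
\]
where each stable letter $p_u$ identifies $\langle u\rangle$ with $\langle t\rangle$. The base embeds, so $\Gamma \le \Gamma' \le G_0$ and $t$ remains an involution. For the action I would take $X = G_0$ with the left regular action: it is free, so no nontrivial element has any fixed point, whence hypotheses (1) ($2$-sharpness) and (3) ($t$ is fixed point free) hold automatically.

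The only substantive point---and the step I expect to carry all the difficulty---is hypothesis (2), that every involution of $G_0$ is conjugate to $t$. This is exactly what the torsion theory of HNN extensions delivers: by Britton's Lemma every element of finite order in $G_0$ is conjugate into the base $\Gamma'$, so an arbitrary involution $v$ of $G_0$ is conjugate to some involution $u$ of $\Gamma'$; and the relation $p_u\inv u\,p_u = t$ makes $u$, hence $v$, conjugate to $t$. Thus a single HNN step already forces all involutions into the class of $t$, so $G_0$ satisfies (1)--(3) and Theorem~\ref{t:main} applies, completing the deduction.
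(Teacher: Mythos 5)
Your proposal is correct and follows essentially the same route as the paper's proof: secure an involution (the paper adds a direct factor of order $2$ where you take a free product with $\langle t\rangle$), fuse all involutions into a single conjugacy class by HNN extensions, let the resulting group act regularly on itself, and apply Theorem~\ref{t:main}. Your only refinement is the observation, via Britton's Lemma, that one simultaneous (multiple) HNN extension already suffices, where the paper speaks of ``iterated HNN-extensions''.
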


\begin{proof} By adding a direct factor of order $2$ if necessary
and iterated HNN-extensions
any group can be extended to a group with a unique nontrivial conjugacy
class of involutions. Letting this group act regularly on itself by right translation all assumptions of Theorem~\ref{t:main} are satisfied.
\end{proof}
\begin{definition}
  A partial action of $G$ on a set $X$ consists of an action of $G_0$
  on $X$ and (injective) partial actions of the generators in $S\cup
  R$ such that for $s\in S, x\in X$ if $xs$ is defined, then so is
  $(xt)s$ and we have $(xt)s=(xs)t$.
\end{definition}

Any element of $G$ can be written as a reduced word in elements of
\[\mathcal{P}=(G_0\setminus 1)\cup R\cup R\inv\cup S\cup S\inv,\]
where we say that a word is \emph{reduced} if there are no subwords of
the form $g_1g_2$, $r^\epsilon r^{-\epsilon}$, $s^\epsilon
s^{-\epsilon}$, $ts_1^{\pm 1}\cdots s_n^{\pm 1}t$ or $s^\epsilon
ts^{-\epsilon}$ for $g_i\in G_0\setminus 1$, $r\in R$, $s,s_i\in S$,
$\epsilon\in\{1,-1\}$. It is easy to see that two reduced words
represent the same element of $G$ if and only if they can be
transformed into each other by swapping adjacent letters $t$ and
$s^\epsilon$.

If $w=p_1\cdots p_n$ is a word and $x$ and element $X$ we say that
$xw$ is \emph{defined} if for all initial segments of $w$ the action
on $x$ is defined, i.e.\ all $xp_1$, $(xp_1)p_2$,\ldots, $(\ldots
(xp_1)\ldots )p_n$ are defined and we set $xw=(\ldots (xp_1)\ldots
)p_n$. Notice that for elements from $G_0$ the action on $X$ is
defined everywhere. If $xw$ is defined and $w'$ is a reduced word
which represents the same element of $G$ as $w$, then $xw'$ is also
defined and we have $xw=xw'$. Thus the expression $xg=y$ makes sense
for $g\in G, x,y\in X$. Furthermore $X$ becomes a gruppoid with
$\hom(x,y)=\{g\in G\mid xg=y\}$ under the natural map
$\hom(x,y)\times\hom(y,z)\to \hom(x,z)$.

If $G$ acts partially on $X$, then there is a canonical partial action
on the set of \emph{pairs}
\[(X)^2 = \{(x,y)\in X^2\mid x\not=y\}.\]

Notice that since $t$ does not fix a point, we have $(x,xt)\in (X)^2$
for all $x\in X$. For $a=(x,y)$ we denote by $\overline a$ the
\emph{flip} $(y,x)$ of $a$. If $ag$ is defined, then so is $\overline
ag=\overline{ag}$.

\begin{definition}
  We call a partial action of $G$ on $X$ \emph{good} if for all pairs
  $a\in (X)^2$ and $g\in G$ the following holds:
 \begin{enumerate}%%[(a)]
 \item $ag=a$ implies $g=1$.
 \item If $ag=\overline a$, then $g$ is conjugate to $t$.
 \item $t$ does not fix an element of $X$.
 \end{enumerate}
\end{definition}
Consider the action of $G_0$ on $X$ as a partial action of $G$ on $X$. Then our assumptions on $G_0$ in Theorem~\ref{t:main}
translate exactly into saying that $G$ acts well on $X$.

A word in $\mathcal{P}$ is \emph{cyclically} reduced if every cyclic
permutation of $w$ is reduced. If a word is cyclically reduced, then
every reduced word which represents the same element of $G$ is also
cyclically reduced. Thus, to be cyclically reduced is a property of
elements of $G$. Clearly every element of $G$ is conjugate to a
cyclically reduced one. This shows that in the definition of a
good partial action we can restrict ourselves to cyclically reduced
elements. Note that the cyclically reduced conjugates of $t$ are
the involutions of $G_0$.

\begin{lemma}[Extending $s$]
  Assume that $G$ acts well on $X$ and that for some $x\in X, s\in S$
  and $\epsilon\in\{1,-1\}$ the expression $xs^\epsilon$ is not
  defined (and hence neither is $xts^\epsilon$). Let
  $x'G_0=\{x'g_0\mid g_0\in G_0\}$ be a set of new elements on which
  $G_0$ acts regularly and extend the partial operation of $G$ to
  $X'=X\cup x'G_0$ by putting $xs^\epsilon=x'$ and
  $(xt)s^\epsilon=x't$. Then $G$ acts well on $X'$.
\end{lemma}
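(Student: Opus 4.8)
The plan is to verify the three conditions in the definition of a good action for the extended set $X'=X\cup x'G_0$. Write $C=x'G_0$ for the new coset. The only new instances of the partial operation are the edges $x\mapsto x'$ and $xt\mapsto x't$ under $s^\epsilon$ (and their inverses under $s^{-\epsilon}$); apart from these, $G_0$ acts regularly on $C$ and no other letter of $\mathcal{P}$ is defined on a point of $C$. Condition (3) is immediate: $t$ has no fixed point on $X$ by hypothesis, and on $C$ we have $x'g_0\cdot t=x'(g_0t)\neq x'g_0$ since $G_0$ acts regularly and $t\neq 1$. For (1) and (2) I would fix a cyclically reduced $g$ and a pair $a=(u,v)\in(X')^2$ with $ag=a$ or $ag=\overline a$, and track the trajectories of $u$ and of $v$ through $X'$. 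The structural fact driving everything is that \emph{no reduced word can pass from $X$ into $C$ and back into $X$}: entering $C$ requires $s^\epsilon$ applied at $x$ or $xt$, leaving requires $s^{-\epsilon}$ applied at $x'$ or $x't$, and in between only a single $G_0$-letter is available (no two consecutive $G_0$-letters in a reduced word); checking the cases, every such round trip spells a forbidden subword $s^\epsilon s^{-\epsilon}$ or $s^\epsilon ts^{-\epsilon}$. Hence a trajectory starting in $X$ stays in $X$ unless it ends in $C$, and a trajectory starting in $C$ has the shape $C^\ast X^\ast C^\ast$.

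From this I would first dispose of the case $u,v\in X$. Since $ug,vg\in\{u,v\}\subset X$, both trajectories end in $X$, hence stay in $X$; so $ag$ agrees with the old partial action, and conditions (1), (2) follow from the hypothesis that $G$ acts well on $X$. Next consider the cases where some trajectory both starts and returns to $C$ (namely $ag=a$ with $u\in C$, or $ag=\overline a$ with $u,v\in C$): the round-trip ban rules out the shape $C\to X\to C$, so the trajectory stays inside $C$, forcing $g\in G_0$ of length $\le 1$. Regularity of the $G_0$-action on $C$ then gives $g=1$ in the fixed-pair case, and $g^2=1$ with $u\neq v$, i.e.\ $g$ an involution of $G_0$ (so conjugate to $t$), in the flip case.

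The genuinely delicate case, which I expect to be the main obstacle, is $ag=\overline a$ with exactly one coordinate in $C$, say $u\in C$ and $v\in X$. Here the trajectory of $u$ (from $C$ to $v\in X$) forces $g$ to begin by leaving $C$, so its first one or two letters are $s^{-\epsilon}$ or $g_0s^{-\epsilon}$; and the trajectory of $v$ (from $X$ to $u\in C$) forces $g$ to end by entering $C$, so its last one or two letters are $s^\epsilon$ or $s^\epsilon g_0'$. Moreover, whenever a single $G_0$-letter sits between the entering $s^\epsilon$ and the leaving $s^{-\epsilon}$, the regular $G_0$-action on $C$ forces that letter to be $t$ (the two marked points of $C$ in play are $x'$ and $x't$). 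Reading the wrap-around of the \emph{cyclically} reduced word $g$ then exhibits one of the patterns $s^\epsilon s^{-\epsilon}$, $s^\epsilon ts^{-\epsilon}$, or two adjacent $G_0$-letters, contradicting cyclic reducedness; so this configuration never occurs. Collecting all cases establishes conditions (1) and (2), whence $G$ acts well on $X'$.
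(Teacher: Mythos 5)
Your proposal is correct and takes essentially the same approach as the paper: the paper tracks the single cycle (or Moebius strip) of pairs in $(X')^2$ where you track the two point trajectories separately, but both proofs rest on the identical mechanism---any passage into and back out of the new coset $x'G_0$ forces a forbidden subword $s^\epsilon s^{-\epsilon}$ or $s^\epsilon t s^{-\epsilon}$ in some cyclic permutation of the cyclically reduced word, while a trajectory confined to $x'G_0$ yields an element of $G_0$, which regularity forces to be $1$ (fixed pair) or an involution of $G_0$, hence a conjugate of $t$ (flip). One small point of precision: in your Case B the round-trip ban as you stated it (forbidding $X\to C\to X$) does not by itself exclude $C\to X\to C$ for a linear reduced word---consistent with your own shape $C^{\ast}X^{\ast}C^{\ast}$---so there, just as in your Case C, the exclusion must come from reading the wrap-around of the cyclic word, which is exactly how the paper phrases it (``a cyclic permutation of $w$ contains the subword $s\cdot s^{-1}$'').
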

\begin{proof}
  Assume $\epsilon=1$, the other case being entirely similar. Let $w$
  be cyclically reduced and $aw=a$ in $X'$. Then the word $w$
  describes a cycle in $(X')^2$ containing $a$. If the cycle contains
  pairs from $X$ only, we are done. If there are two neighbouring
  pairs in the cycle which do not belong to $X$, they must be
  connected by an element $g_0\in G_0\setminus 1$. Thus the cycle
  contains a segment $b,c'_1,d$ or a segment $b,c'_1,c'_2,d$ where
  $b,d\in X$ and $c'_i\notin X$. In the first case we have $bs=c'_1$,
  $c'_1s^{-1}=d$ and in the second case $bs=c'_1$, $c'_1t=c'_2$,
  $c_2s^{-1}=d$. In the first case a cyclic permutation of $w$
  contains the subword $s\cdot s^{-1}$, in the second case $s\cdot
  t\cdot s^{-1}$. Thus $w$ is not cyclically reduced, a contradiction.

  The proof for $aw=\overline a$ is similar: instead of a cycle such
  an element $w$ describes a Moebius strip and we have the additional
  possibility that $a=(x',x'i)$ and $w=i$ for an involution $i\in G$.
\end{proof}
\begin{lemma}[Extending $r$]
  Assume that $G$ acts well on $X$ and that for some $x\in X, r\in R$
  and $\epsilon\in\{1,-1\}$ the expression $xr^\epsilon$ is not
  defined. Choose a set $x'G_0=\{x'g_0\mid g_0\in G_0\}$ of new
  elements on which $G_0$ acts regularly. Extending the partial
  operation of $G$ on $X'=X\cup x'G_0$ by putting $xr^\epsilon=x'$
  yields again a good action of $G$ on $X'$.
\end{lemma}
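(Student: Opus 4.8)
The plan is to follow the proof of the preceding lemma almost verbatim, exploiting that the new elements $x'G_0$ form a regular $G_0$-orbit attached to $X$ through the \emph{single} bridge $xr^\epsilon=x'$ (equivalently $x'r^{-\epsilon}=x$); in particular $r$ is defined on no new point, $r\inv$ only on $x'$, while every element of $G_0$ acts everywhere on $x'G_0$. Assuming $\epsilon=1$, the other case being symmetric, I would first dispose of condition (3): since $G_0$ acts regularly on $x'G_0$ and $t\neq 1$, no new point is fixed by $t$, and $t$ fixes no old point by hypothesis.

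For condition (1), assume $aw=a$ with $w$ cyclically reduced and read $w$ as a closed walk $a=a_0,a_1,\dots,a_n=a$ in $(X')^2$. If every $a_i$ lies in $(X)^2$ the walk stays inside $X$ and goodness of the action on $X$ gives $w=1$. Otherwise some $a_i$ is new. As in the $s$-lemma, two neighbouring new pairs must be joined by an element of $G_0\setminus 1$, since $r$ is undefined on new points while $r\inv$, defined only on $x'$, carries that coordinate back into $X$. Hence every excursion off $X$ is entered by $r$ (the only letter creating a new coordinate from an old one) and left by $r\inv$, with interior steps in $G_0\setminus 1$.

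The decisive difference from the $s$-lemma is that here \emph{no interior step can occur}. Entering via $r$ produces a pair whose new coordinate is exactly $x'$; a single $G_0\setminus 1$-step moves this coordinate to some $x'g_0$ with $g_0\neq 1$, and then $r\inv$ --- the only way back to $X$ --- is no longer defined, so the walk cannot return, while a second consecutive $G_0$-letter is forbidden in a reduced word. Thus every excursion collapses to a segment $b,c',d$ with $br=c'$, $c'r\inv=d$, so a cyclic permutation of $w$ contains the subword $r\,r\inv$, contradicting cyclic reducedness. (The walk could also remain entirely among new pairs and never meet $X$; there $w$ reduces to a single letter of $G_0$, and regularity of the $G_0$-action forces $w=1$.) This is precisely why no analogue of the $sts\inv$-segment of the previous lemma survives: there the two bridges were linked by $t$, whereas a single bridge admits no such detour. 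Condition (2), treated via a Moebius strip as before, is handled identically, the only surviving possibility being $a=(x'g_0,x'g_0 i)$ with $w=i$ an involution of $G_0$; since the cyclically reduced involutions of $G$ are exactly the involutions of $G_0$, such $i$ is conjugate to $t$, as required.

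I expect the genuine obstacle to be precisely the verification that the single bridge admits no interior $G_0$-detour, i.e.\ ruling out a reduced excursion of the form $r\,g_0\,r\inv$; once the trapping phenomenon above is established, the remaining bookkeeping --- mixed versus fully new pairs, and the flip case --- is routine and parallels the preceding lemma.
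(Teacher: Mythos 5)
Your proof is correct and follows essentially the same route as the paper's: where the paper simply asserts that ``it is easy to see'' any nontrivial cycle or Moebius strip must lie entirely in $(X)^2$ or entirely in $(x'G_0)^2$, your trapping argument --- an excursion enters through the single bridge at new coordinate $x'$, and any reduced detour $r\,g_0\,r^{-1}$ with $g_0\neq 1$ strands the walk because $r^{-1}$ is defined only at $x'$ --- is precisely the justification of that assertion, and you correctly identify this as the point where the present lemma differs from the $s$-lemma (where $s^{\epsilon}ts^{-\epsilon}$ is excluded by reducedness rather than by regularity). The remaining cases, goodness of the old action on $(X)^2$ and the flip $(x'g_0,x'g_0i)i=(x'g_0i,x'g_0)$ for an involution $i\in G_0$, match the paper's treatment.
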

\begin{proof}
  Consider a non-trivial cycle (or Moebius strip) in $(X)^2$ described
  by a cyclically reduced word $w$. It is easy to see that the cycle
  (Moebius strip) must either be completely contained in $(x'G_0)^2$
  or completely contained in $(X)^2$. In the first case we have a
  Moebius strip of the form $(x',x'i)i=(x'i,x')$ for an involution
  $i\in G_0$. The second case cannot occur since $G$ acts well on $X$
  by assumption.
\end{proof}

\begin{lemma}[Joining $t$-pairs]
  Assume that $G$ acts well on $X$ and let $a=(x,xt)$ and $b=(y,yt)$
  be pairs for which there is no $g\in G$ with $ag=b$. Let $s\in S$ be
  an element which does not yet act anywhere. Extend the action by
  setting $as=b$. Then this action of $G$ on $X$ is again good.
\end{lemma}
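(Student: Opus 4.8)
The plan is to read the new instruction $as=b$ as the addition of two edges to the pair groupoid $(X)^2$. Since $as=b$ means $xs=y$ and $(xt)s=yt$, and since $s$ acted nowhere before, the letter $s$ now acts on a pair $(u,v)$ precisely when $u,v\in\{x,xt\}$; the only new edges are therefore $a\xrightarrow{\,s\,}b$ and $\bar a\xrightarrow{\,s\,}\bar b$, together with their inverses. First I would record the component picture. From $at=\bar a$ and $bt=\bar b$ we see that $a,\bar a$ lie in one connected component $A$ and $b,\bar b$ in one component $B$ of the old groupoid; the hypothesis that no $g$ satisfies $ag=b$, together with $\bar b=bt$, forces $A\neq B$ (an identification $a\sim\bar b$ would give $a\sim b$ through $t$). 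Hence the two new $s$-edges are the only links between the previously disjoint components $A$ and $B$.

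The second ingredient is a purely local computation inside a single component. Since $G$ acts well on $X$, every pair has trivial stabiliser, so $\hom(b,b)=\hom(\bar b,\bar b)=\{1\}$ and $\hom(b,\bar b)=\hom(\bar b,b)=\{t\}$, and likewise at $a,\bar a$. Moreover the empty word is the only reduced word representing $1$ and the single letter $t$ is the only reduced word representing $t$. Consequently, whenever a walk enters $B$ through $s$ and leaves through $s^{-1}$, the intervening old segment is reduced and represents $1$ or $t$, hence is empty or the single letter $t$, so $w$ contains $ss^{-1}$ or $sts^{-1}$; symmetrically a passage through $A$ produces $s^{-1}s$ or $s^{-1}ts$. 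All four are forbidden subwords.

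Using that we may assume $w$ cyclically reduced, take such a $w$. If $cw=c$, the associated closed walk crosses the $s$-bridges an even number of times; if it crosses at least twice, two consecutive crossings produce one of the forbidden subwords above, contradicting cyclic reducedness, so there are no crossings, the walk lies in the old graph, and goodness of the old action gives $w=1$. The case $cw=\bar c$ is handled identically whenever there are two or more crossings.

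I expect the single-crossing Moebius case to be the main obstacle, since the subword argument cannot reach it. Here I would exploit the flip. Writing $w=g_1 s^{\eta} g_2$ with $g_1,g_2$ free of $s$, the walk runs $c\xrightarrow{g_1}e\xrightarrow{\,s^{\eta}}e'\xrightarrow{g_2}\bar c$ with $e,e'$ bridge-endpoints; for $\eta=+1$ this places $c\in A$ and $\bar c\in B$. Flipping the defined relation $cg_1=e$ yields $\bar c g_1=\bar e$; but $\bar e\in A$, while $\bar c\in B$ and $g_1$, being free of $s$, cannot leave the component $B$ of $\bar c$, so $\bar c g_1\in B$. This contradicts $A\cap B=\emptyset$ (the sign $\eta=-1$ is symmetric). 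Hence there are no crossings at all, and goodness of the old action gives that $w$ is conjugate to $t$. Finally, condition (3) is immediate, as neither $X$ nor the action of $t$ on $X$ has changed.
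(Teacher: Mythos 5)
Your proof is correct, and at its core it runs on the same two facts as the paper's: (a) the old components of $a$ and $b$ in the pair groupoid are disjoint (your $A\neq B$, which in the paper appears as ``the assumption on $a,b$'' after closing it under $t$-flips), and (b) trivial stabilisers plus $at=\overline a$, $bt=\overline b$ force any $s$-free segment between two bridge endpoints to represent $1$ or $t$, hence to be empty or the single letter $t$, producing the forbidden subwords $ss^{-1}$ or $sts^{-1}$. Where you differ is in the bookkeeping. The paper normalises the word: by cyclic permutation, inversion and flipping it assumes $w=s\cdot w'$ with $aw=a$ (resp.\ $aw=\overline a$), then inspects the segment $u$ up to the next occurrence $s^{\epsilon}$, splitting on $\epsilon$; your ``crossings must alternate'' observation is exactly the paper's $\epsilon=1$ case, and your forbidden-subword step is its $\epsilon=-1$ case. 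You instead count bridge crossings globally and use parity, which lets you skip the normalisation. The one place where the routes genuinely diverge is the Moebius word with exactly one crossing: the paper kills it in one line, since $w=sw'$ with $w'$ free of $s$ gives $bw'=\overline a$, directly contradicting the (flip-closed) hypothesis; you instead prove that old components are closed under the flip ($cg_1=e$ implies $\overline c\,g_1=\overline e$) and derive $\overline c\in A\cap B=\emptyset$. These are equivalent facts, but your version makes explicit a structural point the paper leaves implicit, namely flip-closure of components, at the cost of a slightly longer argument. Both treatments of condition (3) are the same triviality. In short: same engine, different chassis --- yours makes the geometric picture (two flip-closed components joined by a single $s$-bridge) explicit, the paper's word-normalisation is more compressed.
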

\begin{proof}
 Let $w$ be a cyclically reduced word with $cw=c$ for some pair $c\in
 (X)^2$. If $s$ does not occur in $w$, then we have $w=1$ since the
 previous action on $X$ was good. Hence we may assume that $w$
 contains $s$. By cyclically permuting $w$ and taking inverses we may
 also assume that $w=s\cdot w'$ and $aw=a$ and thus $bw'=a$. By
 assumption on $a,b$ the subword $w'$ must contain $s$. Hence we may
 write $w'=u\cdot s^\epsilon v$ for some subword $u$ not containing
 $s$. We distinguish two cases:
 \begin{enumerate}%[1.]
 \item $\epsilon=1$. Then we must have $bu=a$ or $bu=\overline a$ as
   $s$ is only defined on these pairs. Since $bu=\overline a$ implies
   $b(ut)=a$ both cases contradict the assumption on $a,b$.
 \item $\epsilon=-1$. Then we have $bu=b$ or $bu=\overline b$. If
   $bu=b$, then $u=1$ and $w$ is not reduced. If $bu=\overline b=bt$,
   then $u=t$ and $w$ contains the subword $s\cdot t\cdot s^{-1}$,
   contradicting the assumption that $w$ be reduced.
 \end{enumerate}
 Next we assume that $w$ is cyclically reduced with $cw=\overline c$
 for some pair $c\in (X)^2$. If $w$ does not contain $s$, then $w$ is
 conjugate to $t$ since the previous action on $X$ was good. So we may
 assume that $w=s\cdot w'$ and $aw=\overline a$, i.e. $bw'=\overline
 a$. By choice of $a,b$ we must have $w'$ containing $s$ and we see as
 before that this is impossible.
\end{proof}

\begin{lemma}[Joining other pairs]\label{L:jop}
  Assume that $G$ acts well on $X$ and let $a$ and $b$ be pairs in
  $(X)^2$ such that there is no $g\in G$ with $ag=b$ or $ag=\overline
  b$. Assume furthermore that there is no $g$ in $G$ flipping $b$ and
  that the action of $r\in R$ is not yet defined anywhere. Extending
  the partial action by setting $ar=b$ yields again a good action of
  $G$ on $X$.
\end{lemma}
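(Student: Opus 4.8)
The plan is to mirror the proof of the \emph{Joining $t$-pairs} lemma, verifying goodness conditions (1) and (2) for cyclically reduced words; condition (3) is untouched since the action of $t$ is unchanged. Because the cyclically reduced conjugates of $t$ lie in $G_0$ and hence contain no letter $r^{\pm1}$, it suffices to prove that for every cyclically reduced $w$ containing $r$ or $r^{-1}$ neither $cw=c$ nor $cw=\overline c$ can hold for any $c\in(X)^2$; words avoiding $r^{\pm1}$ are disposed of by the hypothesis that $G$ already acts well on $X$.

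So suppose $w$ is cyclically reduced, contains $r^{\pm1}$, and satisfies $cw=c$ or $cw=\overline c$. Using the flip identity $\overline c\,g=\overline{cg}$ one checks that both properties persist under passing to $w\inv$ and under cyclic permutation, which only moves the distinguished pair to $cp_1$. Since $r^{\pm1}$ is defined only on the pairs $a,\overline a$ (via $r$) and $b,\overline b$ (via $r\inv$), I may therefore normalise to $w=r\cdot w'$ and, replacing $c$ by its flip if necessary, assume the initial pair is $a$. Then $ar=b$, so $bw'=a$ in case (1) and $bw'=\overline a$ in case (2).

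Next I would show $w'$ must itself contain $r^{\pm1}$: otherwise $w'$ represents an element $g\in G$ whose action on $b$ is computed entirely in the old good action, with $bg\in\{a,\overline a\}$; the flip identity then gives $a\,g\inv=b$ or $a\,g\inv=\overline b$, contradicting the hypothesis that no element of $G$ carries $a$ to $b$ or to $\overline b$. Writing $w'=u\cdot r^{\delta}\cdot v$ with $u$ free of $r^{\pm1}$, I examine $bu$ against the domain of $r^{\delta}$. If $\delta=1$ then $bu\in\{a,\overline a\}$, and each possibility again produces an old element moving $a$ to $b$ or to $\overline b$, impossible. If $\delta=-1$ then $bu\in\{b,\overline b\}$: the case $bu=b$ forces $u=1$, so $w$ contains the subword $r\,r\inv$ and is not reduced, while $bu=\overline b$ exhibits an element flipping $b$, contradicting the assumption that no $g\in G$ flips $b$. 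Every branch is excluded, settling both conditions.

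The main obstacle I anticipate is purely bookkeeping: keeping the flips consistent through the inverse/cyclic-permutation normalisation and checking that each of the four terminal sub-cases lands on exactly one of the three hypotheses (no $g$ with $ag=b$, no $g$ with $ag=\overline b$, no $g$ flipping $b$), together with reducedness of $w$. In contrast to the $s$-lemmas there is no interaction between $r$ and $t$, so no $s\,t\,s\inv$-type subwords arise and the combinatorics is in fact slightly simpler.
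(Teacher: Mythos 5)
Your proposal is correct and follows essentially the same route as the paper's own proof: normalise a cyclically reduced $w$ containing $r^{\pm 1}$ to $w=r\cdot w'$ based at $a$, show $w'$ must again contain $r$, write $w'=u\cdot r^{\delta}v$ with $u$ free of $r^{\pm 1}$, and kill each sub-case via the hypotheses on $a,b$ (no $g$ with $ag=b$ or $ag=\overline{b}$, no flip of $b$) and reducedness of $w$. The only difference is organisational: you treat the conditions $cw=c$ and $cw=\overline{c}$ in parallel with the normalisation spelled out explicitly, whereas the paper argues the first case in full and disposes of the flip case by reference to it.
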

\noindent Note that $a$ may or may not be a $t$-pair.

\begin{proof}
  Let $w$ be cyclically reduced and $cw=c$ for some pair $c\in (X)^2$.
  If $r$ does not appear in $w$, then we have $w=1$ since the previous
  action on $X$ is good. Hence we may assume again as before that we
  have $w=r\cdot w'$ and $aw=a$. Hence $bw'=a$. By assumption on
  $a,b$, the word $w'$ must contain $r$. Write $w'=u\cdot r^\epsilon
  v$ for some subword $u$ not containing $r$. We distinguish two cases
  \begin{enumerate}%[1.]
  \item $\epsilon=1$. Then $bu=a$ or $bu=\overline a$ as $r$ is only
    defined there. But this contradicts our choice of $a,b\in (X)^2$.
  \item $\epsilon=-1$. Then we have $bu=b$ or $bu=\overline b$. If
    $bu=b$, then we have $u=1$ by assumption on the previous action
    and $w$ is not reduced. Hence $bu=\overline b$, contradicting the
    assumption that no element of $G$ flips $b$.
  \end{enumerate}
  Now assume that there is some pair $c$ with $cw=\overline c$. If $w$
  does not contain $r$, then $w$ is conjugate to $t$ since the
  previous action is good. Hence we may again assume that we have
  $w=r\cdot w'$ and $aw=\overline a$, hence $bw'=\overline a$. By
  assumption on $a$ and $b$, the word $w'$ must contain $r$ and as
  before we see that this is impossible.
\end{proof}

\begin{corollary}
  Assume that $G$ acts well on $X$ with $|X|\leq \max\{\aleph_0,|G|\}$
  and there are sufficiently many elements of $R$ and $S$ whose action
  is not yet defined anywhere. Then we can extend the partial action
  of $G$ on $X$ to a sharply $2$-transitive action on some appropriate
  superset $Y$.
\end{corollary}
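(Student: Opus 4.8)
The plan is to reach a \emph{total} and \emph{transitive} good action and to observe that such an action is automatically sharply $2$-transitive. Indeed, if $G$ acts well and totally on $Y$, then $(Y)^2$ is a genuine $G$-set, freeness of this action is exactly condition (1) of goodness ($ag=a\Rightarrow g=1$), and transitivity together with freeness means that $G$ acts regularly on ordered pairs of distinct points, i.e.\ sharply $2$-transitively. So it suffices to enlarge $X$ and its partial action to one that is good, total (every generator in $R\cup S$ acts everywhere, in both directions), and transitive on pairs. Throughout, goodness is preserved: each of the four preceding lemmas yields a good action, and goodness passes to unions of increasing chains because any violation is witnessed by a single cyclically reduced word and a single pair, hence already occurs at a bounded stage. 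Write $\kappa=\max\{\aleph_0,|G|\}$, so that $|X|\le\kappa$ and $|R|=|S|=\kappa$.

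There are two families of obligations. First, \emph{totality}: for every point $x$, every generator $p\in R\cup S$ and every $\epsilon\in\{1,-1\}$, make $xp^\epsilon$ defined, which is achieved by \emph{Extending $s$} and \emph{Extending $r$}, each adding one fresh copy $x'G_0$ of $G_0$. Second, \emph{transitivity}: merge all $G$-orbits on $(X)^2$ into one. Here the key is a flip/orbit dichotomy. For a good action, $ag=\overline a$ forces $g$ conjugate to $t$; writing $g=h\inv t h$ one computes that $c=ah\inv$ satisfies $ct=\overline c$, so $c=(u,ut)$ is a $t$-pair and $a$ lies in the orbit of a $t$-pair. Conversely every $t$-pair is flipped by $t$. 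Hence the flippable pairs are exactly those in orbits meeting the set of $t$-pairs. Consequently, once all $t$-pairs have been collected into a single orbit $O_t$ (using \emph{Joining $t$-pairs}, each merge consuming a fresh $s\in S$), every remaining orbit consists solely of non-flippable pairs. For such an orbit $O\ne O_t$ pick $b\in O$ and a $t$-pair $a\in O_t$: then $b$ is not flippable and neither $b$ nor $\overline b$ lies in $O_t$, so the hypotheses of \emph{Joining other pairs} hold; setting $ar=b$ for a fresh $r\in R$ merges $O_t$ with $O$, and since $\overline a\,r=\overline b$ it simultaneously merges the orbit of $\overline b$. Thus each such step removes an orbit-pair, and transfinitely we reach a single orbit.

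These two families must be interleaved, since totalizing adds new points, hence new pairs and new $t$-pairs, which reopen transitivity obligations. I would therefore run a single transfinite recursion of length $\kappa$, maintaining a good partial action on an increasing chain of sets of size $\le\kappa$, and dovetail the obligations by a bookkeeping function that at each stage addresses the next pending task involving objects already present (taking unions at limit stages). Reserving subsets $S''\subseteq S$ and $R''\subseteq R$ of size $\kappa$ exclusively for the joining steps guarantees that a generator \emph{not yet acting anywhere} is always available: at any stage $\alpha<\kappa$ fewer than $\kappa$ of them have been used. The cardinality bound is maintained because each step adds at most $|G_0|\le\kappa$ points and there are $\le\kappa$ steps. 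At the end of the recursion every definedness task and every orbit-merge has been carried out, so the action on $Y=\bigcup_\alpha X_\alpha$ is total, transitive and good; this is the desired sharply $2$-transitive extension.

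The main obstacle is precisely this bookkeeping: one must verify that extension and joining can be scheduled together so that (i) goodness is never lost, (ii) the non-flippability hypothesis of \emph{Joining other pairs} keeps holding --- which is exactly what the flip/orbit dichotomy secures once the $t$-pairs are unified --- and (iii) the supply of fresh generators and the cardinality budget are never exhausted. The individual moves are nothing but the four lemmas already proved; the real content is organizing them into a convergent transfinite process.
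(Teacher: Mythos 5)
Your proposal is correct and takes essentially the same route as the paper: a transfinite application of the four lemmas in which all $t$-pairs are first unified into one orbit via \emph{Joining $t$-pairs}, after which \emph{Joining other pairs} (applied with $a$ a $t$-pair) handles the remaining orbits, the key point in both being that a flipping element is conjugate to $t$, so flippable pairs already lie in the orbit of a $t$-pair. The paper merely packages this differently---it arranges that every pair be flippable and deduces transitivity a posteriori, whereas you merge orbits directly and make the bookkeeping explicit---but the mathematical content is identical.
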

\begin{proof}
  Fix a $t$-pair $a$ in $X_0$. Using the previous lemmas we find the
  set $Y$ with a $2$-sharp action of $G$ on $Y$ with the following
  properties:
  \begin{enumerate}%[1.]
  \item all $t$-pairs are connected to $a$;
  \item any pair can be flipped by an element of $G$.
  \end{enumerate}
  The last property can be achieved using Lemma \ref{L:jop}: if $b$
  cannot be flipped at some stage of the construction, we can connect
  $a$ and $b$ in a later stage. Then $b$ can be flipped as $a$ can.
  Note that Lemma \ref{L:jop} is used only for $t$-pairs $a$.

  This easily implies that the action of $G$ on $Y$ is sharply
  $2$-transitive: It is left to show that all pairs are to connected
  $a$. Let $b$ be a pair and $g\in G$ so that $bg=\overline b$. Then
  $g=hgh\inv$ for some $h\in G$. This implies $(bh)t=\overline{bh}$,
  so $bh$ is a $t$-pair and whence connected to $a$
\end{proof}

\noindent This concludes the proof of Theorem \ref{t:main} and its
corollary. Note that our construction yields a group action for which
no involution has a fixed point.

%Fuer arxiv.org mu? man anders herum auskommentieren und urydirekt.bbl
% mitschicken.

%\bibliography{mrabbrev,urydirekt}
%\bibliographystyle{plain}
%\input{urydirekt.bbl}

\vspace{2cm}

\begin{small}
\noindent\parbox[t]{15em}{
Katrin Tent,\\
Mathematisches Institut,\\
Universit\"at M\"unster,\\
Einsteinstrasse 62,\\
D-48149 M\"unster,\\
Germany,\\
{\tt tent@math.uni-muenster.de}}
\hfill\parbox[t]{18em}{
Martin Ziegler,\\
Mathematisches Institut,\\
Albert-Ludwigs-Universit\"at Freiburg,\\
Eckerstr. 1,\\
D-79104 Freiburg,\\
Germany,\\
{\tt ziegler@uni-freiburg.de}}

\end{small}
\end{document}